\date{}
\title{Lower bounds on geometric Ramsey functions\thanks{Research
supported by the  ERC Advanced Grant No.~267165 and by ETH Zurich.
}}
\newif\ifcmts
\author{
{\sc Marek Eli\'a\v{s}\thanks{Partially supported by the Charles University
Grants GAUK 1262213 and SVV-2013-267313.}}\\
   {\footnotesize Department of Applied Mathematics}\\[-1.5mm]
   {\footnotesize  Charles University, Malostransk\'{e} n\'{a}m. 25}\\[-1.5mm]
{\footnotesize  118~00~~Praha~1,
   Czech Republic}
\and
{\sc Ji\v{r}\'{\i} Matou\v{s}ek}\thanks{Partially
supported by the project CE-ITI (GACR P202/12/G061) of the Czech Science
Foundation.}
\\
   {\footnotesize Department of Applied Mathematics}\\[-1.5mm]
   {\footnotesize  Charles University, Malostransk\'{e} n\'{a}m. 25}\\[-1.5mm]
{\footnotesize  118~00~~Praha~1,
   Czech Republic, and}\\
{\footnotesize    Institute of  Theoretical Computer Science}\\[-1.5mm]
{\footnotesize    ETH Zurich,
      8092 Zurich, Switzerland}
\and
{\sc Edgardo Rold{\'a}n-Pensado}
\\
{\footnotesize    Institute of  Theoretical Computer Science}\\[-1.5mm]
{\footnotesize    ETH Zurich,
      8092 Zurich, Switzerland}
\and
{\sc Zuzana Safernov{\'a}\footnotemark[2]\textsuperscript{~,}\thanks{Partially
supported by the project CE-ITI (GACR P202/12/G061) of the Czech Science
Foundation.}}
\\
   {\footnotesize Department of Applied Mathematics and} \\[-1.5mm]
   {\footnotesize Computer Science Institute}\\[-1.5mm]
   {\footnotesize  Charles University, Malostransk\'{e} n\'{a}m. 25}\\[-1.5mm]
{\footnotesize  118~00~~Praha~1,
   Czech Republic}
}
\newtheorem{theorem}{Theorem}[section]
\newtheorem{prop}[theorem]{Proposition}
\newtheorem{lemma}[theorem]{Lemma}
\newtheorem{corol}[theorem]{Corollary}
\newcommand{\heading}[1]{\vspace{1ex}\par\noindent{\bf #1}}
\newcommand{\ProofEndBox}{{\ifhmode\unskip\nobreak\hfil\penalty50 \else
          \leavevmode\fi\quad\vadjust{}\nobreak\hfill$\Box$
            \finalhyphendemerits=0 \par}}
\newcommand{\proofend}{\ProofEndBox\smallskip}
\newcommand{\R}{{\mathbb{R}}}
\newcommand\eps{\varepsilon}
\newcommand{\sgn}{\mathop {\rm sgn}\nolimits}
\newcommand\balpha{{\boldsymbol{\alpha}}}
\newcommand\bbeta{{\boldsymbol{\beta}}}
\DeclareMathOperator{\OT}{OT}
\newcommand{\COT}{\OT^*}
\newcommand\sdelta{\overline{\delta}}
\def\:{\colon}
\newcommand{\alterdef}[1]{\!\left\{\!\!\begin{array}{ll}
                                   #1 \end{array}  \right. }
\long\def\onefigure#1#2{
\begin{figure*}[tbp]
\begin{center}
#1
\end{center}
\caption{#2}
\end{figure*}
}
\def\immediateFigure#1{%
\smallskip\begin{center}#1\end{center}\smallskip }
\newcommand{\labfig}[2]  
{\onefigure{\mbox{\includegraphics{Figures/#1}}}{\label{f:#1} #2} }
\newcommand{\labfigw}[3]  
{\onefigure{\mbox{\includegraphics[width=#2]{Figures/#1}}}{\label{f:#1} #3}}
\newcommand{\immfig}[1]  
{\immediateFigure{\mbox{\includegraphics{Figures/#1}}}}
\newcommand{\immfigw}[2] 
{\immediateFigure{\mbox{\includegraphics[width=#2]{Figures/#1}}}}
\DeclareMathOperator{\twr}{twr}
\newcommand\leqone{<_1}
\newcommand{\marrow}{\marginpar{\boldmath$\longleftarrow$}}
\newcommand{\jirka}[1]{\ifhmode\newline\fi\marrow \textsf{*** (JIRKA: ) #1\newline}}
\newcommand{\marek}[1]{\ifhmode\newline\fi\marrow \textsf{*** (marek: ) #1\newline}}
\newcommand{\edgardo}[1]{\ifhmode\newline\fi\marrow \textsf{*** (Edgardo: ) #1\newline}}
\newcommand{\zuzka}[1]{\ifhmode\newline\fi\marrow \textsf{*** (Zuzka: ) #1\newline}}
\newcommand{\marrow}{}
\newcommand{\jirka}[1]{}
\newcommand{\marek}[1]{}
\newcommand{\edgardo}[1]{}
\begin{document}

\maketitle

\begin{abstract} 
We continue a sequence of recent works studying Ramsey functions for
semialgebraic predicates in $\R^d$. A \emph{$k$-ary semialgebraic
predicate} $\Phi(x_1,\ldots,x_k)$ on $\R^d$
is a Boolean combination of polynomial equations and inequalities in the
$kd$ coordinates of $k$ points $x_1,\ldots,x_k\in\R^d$. 
A sequence $P=(p_1,\ldots,p_n)$
of points in $\R^d$ is called \emph{$\Phi$-homogeneous} 
if either $\Phi(p_{i_1},
\ldots,p_{i_k})$ holds for all choices $1\le i_1<\cdots<i_k\le n$,
or it holds for no such choice. The Ramsey function $R_\Phi(n)$ is 
the smallest $N$ such that every point sequence of length $N$
contains a $\Phi$-homogeneous subsequence of length~$n$.

 Conlon, Fox, Pach, Sudakov, and Suk constructed the first examples of
semialgebraic predicates with the Ramsey function bounded from below
by a tower function of arbitrary height: for every $k\ge 4$, they exhibit
a $k$-ary $\Phi$ in dimension $2^{k-4}$ with $R_\Phi$  bounded below by a tower of height $k-1$.  We reduce the 
dimension in their construction, obtaining a $k$-ary semialgebraic 
predicate $\Phi$ on $\R^{k-3}$ with $R_\Phi$ bounded below by a tower of height 
$k-1$. 

We also provide a natural geometric Ramsey-type theorem with a large
Ramsey function. We call a point sequence $P$ in $\R^d$ \emph{order-type
homogeneous} if all $(d+1)$-tuples in $P$ have the same orientation.
Every sufficiently long point sequence in general position
in $\R^d$ contains an order-type homogeneous
subsequence of length $n$, and the corresponding Ramsey function
has recently been studied in several papers. Together with
a recent work of B\'ar\'any, Matou\v{s}ek, and P\'or,
our results imply a tower function of $\Omega(n)$
of height $d$ as a lower bound, matching an upper bound by Suk
up to the constant in front of~$n$.
 
\end{abstract}

\section{Introduction}

\heading{Ramsey's theorem and the classical Ramsey function.}
A classical and fundamental theorem of Ramsey claims that for every $n$
there is a number $N$ such that for every coloring of 
the edge set of the complete graph $K_N$ on
$N$ vertices there is a \emph{homogeneous} subset of $n$ vertices,
meaning that all edges in the complete subgraph induced by these
$n$ vertices have the same color. More generally, for every $k$ and $n$
there exists $N$ such that if the set of all $k$-tuples of elements
of an $N$-element set $X$ is colored by two colors, then there
exists an $n$-element homogeneous $Y\subseteq X$, with all $k$-tuples from $Y$
having the same color. Let $R_k(n)$ stand for the smallest $N$
with this property.


Considering $k$ fixed and $n$ large,
the best known lower and upper bounds for the Ramsey function $R_{k}(n)$ are
of the form\footnote{We employ the usual asymptotic notation
for comparing functions: $f(n)=O(g(n))$ means that
$|f(n)|\le C|g(n)|$ for some $C$ and all $n$, where $C$ may depend
on parameters declared as constants (in our case on $k$);
$f(n)=\Omega(g(n))$ is equivalent to $g(n)=O(f(n))$;
and $f(n)=\Theta(g(n))$ means that both $f(n)=O(g(n))$
and $f(n)=\Omega(g(n))$.}
$R_2(n)=2^{\Theta(n)}$ and, for $k\ge 3$,
\[
\twr_{k-1}(\Omega(n^2))\le R_{k}(n) \le \twr_{k}(O(n)),
\]
where the tower function $\twr_k(x)$ is defined by $\twr_1(x) = x$
 and $\twr_{i+1} (x) = 2^{\twr_i (x)}$. 

A widely believed, and probably very difficult, conjecture
of Erd\H{o}s and Hajnal asserts that the upper bound is essentially 
the truth. This  is supported by known bounds for more than two colors,
where the lower bound for $k$-tuples is also a tower
of height $k$; see Conlon, Fox, and
Sudakov \cite{conlon-al} for a recent improvement
and more detailed overview of the known bounds.


\heading{Better Ramsey functions for geometric Ramsey-type results. }
Ramsey's theorem can be used to establish many geometric Ramsey-type results
concerning configurations of points, or of other geometric objects, in $\R^d$.
The first two examples, which up until now remain among the most
significant and beautiful ones, come from a 1935 paper of
Erd\H{o}s and Szekeres \cite{es-cpg-35}.

The first one asserts that every sufficiently long sequence
$(x_1,\ldots,x_N)$ of real numbers contains a subsequence
$(x_{i_1},x_{i_2},\ldots,x_{i_n})$, $i_1<i_2<\cdots<i_n$, that
is either increasing, i.e., $x_{i_1}< x_{i_2}<\cdots<x_{i_n}$,
or nonincreasing, i.e.,  $x_{i_1}\ge x_{i_2}\ge\cdots\ge x_{i_n}$.

Ramsey's theorem for $k=2$ yields the bound $N\le R_2(n)\le \twr_2(O(n))$
(color a pair $\{i,j\}$, $i<j$, red if $x_i<x_j$ and blue 
if $x_i\ge x_j$), but the result is known to hold with $N=(n-1)^2+1$,
an exponential improvement over $R_2(n)$.

For the second of the two Erd\H{o}s--Szekeres theorems mentioned above,
we consider a sequence $P=(p_1,p_2,\ldots,p_N)$ of points in the plane;
for simplicity, we assume that the $p_i$ are in general position
(no three collinear). If $N$ is sufficiently large, then there is
a subsequence $(p_{i_1},\ldots,p_{i_n})$, $i_1<i_2<\cdots<i_n$,
forming the vertex set of a convex $n$-gon, enumerated clockwise
or counterclockwise. 

This time Ramsey's theorem yields $N\le R_3(n)\le\twr_3(O(n))$,
 by coloring a triple
$\{i,j,k\}$, $i<j<k$, red if $p_i,p_j,p_k$ appear clockwise around the
boundary of their convex hull, and blue otherwise. Again, the optimal
bound is one exponential better, of order $2^{\Theta(n)}$.

It is natural to ask, what is special about the two-colorings of 
pairs or triples in the above two examples, what makes the Ramsey
functions here considerably smaller, compared to arbitrary colorings?

One kind of a combinatorial condition for two-colorings 
of $k$-tuples implying such improved bounds was given by
Fox, Pach, Sudakov, and Suk \cite{FoxPachSudSuk}, and another 
by the first two authors  \cite{highes-aim}; both of them include
the two Erd\H{o}s--Szekeres results as special cases.
However, a considerably more general, and probably more interesting,
reason for the better Ramsey behavior of these geometric examples
is that the colorings are ``algebraically defined''; more precisely,
they are given by \emph{semialgebraic predicates}.

\heading{Upper bounds for semialgebraic colorings. }
Let $x_1,\ldots,x_k$ be points in $\R^d$, with $x_{i,j}$ denoting
the $j$th coordinate of $x_i$; we regard the $x_{i,j}$ as variables.
A \emph{$k$-ary $d$-dimensional semialgebraic predicate} $\Phi(x_1,\ldots,x_k)$
 is a Boolean combination of polynomial equations and inequalities 
in the $x_{i,j}$. More explicitly, there are a Boolean formula
$\phi(X_1,\ldots,X_t)$ in  Boolean variables $X_1,\ldots,X_t$ and
polynomials $f_1,\ldots,f_t$ in the variables
$x_{i,j}$, $1\le i\le k$, $1\le j\le d$, such that $\Phi(x_1,\ldots,x_k)=
\phi(A_1,\ldots,A_t)$, where $A_\ell$ is true if
$f_\ell(x_{1,1},\ldots,x_{k,d})\ge 0$ and false otherwise.

We call a sequence $(p_1,\ldots,p_n)$ of points in $\R^d$
{\em $\Phi$-homogeneous} if either $\Phi(p_{i_1},
\ldots,p_{i_k})$ holds for every choice $1\le i_1<\cdots<i_k\le n$,
or it holds for no such choice. The Ramsey function $R_\Phi(n)$ is 
the smallest $N$ such that every point sequence of length $N$
contains a $\Phi$-homogeneous subsequence of length~$n$.

The following general upper bound was first proved by 
Alon, Pach, Pinchasi, Radoi\v{c}i\'c, and Sharir \cite{apprs-semialg} for $k=2$,
and then generalized by Conlon, Fox, Pach, Sudakov, and Suk \cite{cfpss-semialg} for $k\ge 3$:

\begin{theorem}[\cite{apprs-semialg,cfpss-semialg}]\label{t:ub}
 For every $d$, $k$, and a $k$-ary 
$d$-dimensional semialgebraic predicate $\Phi$,
\[
R_\Phi(n)\le\twr_{k-1}(n^C),
\]
where $C$ is a constant depending on $d,k,\Phi$.\footnote{Actually,
the constant $C$ depends on $\Phi$ only through its  \emph{description
complexity}, which Conlon et al.\ define as $\max(m,D)$,
where $m$ is the number of polynomials occurring in $\Phi$ and
$D$ is the maximum degree of these polynomials. Thus, the bound
does not depend on the magnitude of the coefficients in the polynomials.}
\end{theorem}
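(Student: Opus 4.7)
The natural strategy is induction on the arity $k$. For the base case $k=2$, the claim specializes to $R_\Phi(n) \leq \twr_1(n^C) = n^C$, which is precisely the APPRS result \cite{apprs-semialg}: semialgebraic graphs of bounded complexity on $N$ vertices contain a clique or independent set of size $\Omega(N^c)$ for some $c = c(\Phi) > 0$ (a "polynomial Erd\H{o}s--Hajnal" property for semialgebraic graphs), and this yields the required polynomial Ramsey bound.

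For the inductive step $k \geq 3$, assuming the claim for $(k-1)$-ary predicates, I would mimic the classical Ramsey/Erd\H{o}s--Szekeres argument on a sequence $P$ of $N = \twr_{k-1}(n^C)$ points, but replace the wasteful exponential loss at each iteration by a polynomial one. Concretely, I build a subsequence $p_1, p_2, \ldots, p_m$ together with a decreasing chain of candidate sets $V_0 \supseteq V_1 \supseteq \cdots$, $p_i \in V_{i-1} \setminus V_i$, maintaining the invariant that for every $1 \leq j_1 < \cdots < j_{k-1} \leq i$ and every $q \in V_i$, the value $\Phi(p_{j_1}, \ldots, p_{j_{k-1}}, q)$ depends only on $(j_1, \ldots, j_{k-1})$. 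In the classical proof, ensuring this at step $i$ splits $V_{i-1}$ according to all $2^{\binom{i-1}{k-2}}$ possible Boolean completions of the new $(k-1)$-tuples; here, however, $V_{i-1}$ is only refined by the sign pattern of the polynomials defining $\Phi$ applied to the $\binom{i-1}{k-2}$ new $(k-1)$-tuples that involve $p_i$. By the Milnor--Thom--Warren bound, the number of such sign patterns is only $i^{O(1)}$, so $|V_i| \geq |V_{i-1}|/i^{O(1)}$, and iterating one maintains $V_m$ nonempty for $m$ as large as $\twr_{k-2}(n^{C'})$.

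Once such a subsequence is in hand, fix any point $\hat q \in V_m$ and set $\Phi'(x_1, \ldots, x_{k-1}) := \Phi(x_1, \ldots, x_{k-1}, \hat q)$. This is a $(k-1)$-ary semialgebraic predicate whose description complexity is controlled by that of $\Phi$, and the invariant guarantees $\Phi(p_{i_1}, \ldots, p_{i_k}) = \Phi'(p_{i_1}, \ldots, p_{i_{k-1}})$ for all $i_1 < \cdots < i_k \leq m$ (since $\hat q$ and $p_{i_k}$ both lie in $V_{i_{k-1}}$). Applying the inductive hypothesis to $\Phi'$ on $(p_1, \ldots, p_m)$ produces a $\Phi'$-homogeneous, and hence $\Phi$-homogeneous, subsequence of length $n$. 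The main obstacle in carrying out this plan is the quantitative bookkeeping: verifying that the compound polynomial losses at each step really amount to only one additional level of the tower, and tracking the degrees and number of the defining polynomials through the iteration so that the constant $C$ can be chosen uniformly.
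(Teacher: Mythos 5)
The paper does not prove Theorem~\ref{t:ub} at all---it is quoted with citations from Alon et al.\ \cite{apprs-semialg} ($k=2$) and Conlon et al.\ \cite{cfpss-semialg} ($k\ge 3$)---and your proposal is essentially a correct reconstruction of the argument in those sources: induction on the arity with the polynomial APPRS bound as the base case, and an Erd\H{o}s--Rado-style greedy refinement in which the Milnor--Thom/Warren bound on sign patterns limits the loss per step to a polynomial factor, so each arity reduction costs only one exponential. The only detail worth making explicit is that, since $\Phi$ is evaluated on index-increasing tuples of a sequence, you should take $p_i$ to be the point of smallest index in $V_{i-1}$ (so the selected points form a genuine subsequence and the invariant matches the index order); with that, the bookkeeping you describe goes through as expected.
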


Thus, the Ramsey function for $k$-ary semialgebraic predicates is
bounded above by a tower one lower than the ``combinatorial''
Ramsey function $R_k(n)$. Let us note that for the case of increasing
or nonincreasing subsequences ($k=2$, $d=1$) and subsequences in
convex position ($k=3$, $d=2$) as above, Theorem~\ref{t:ub} yields
somewhat weak bounds, namely, $n^{O(1)}$ and $2^{n^{O(1)}}$
instead of $n^2$ and $2^{O(n)}$, respectively, but still in the right
range.

By very different methods, Bukh and the second author \cite{BukhMat}
obtained a doubly exponential upper bound for all one-dimensional
semialgebraic predicates, for arbitrary $k$:

\begin{theorem}[\cite{BukhMat}] \label{t:bm}
For every $1$-dimensional
semialgebraic predicate $\Phi$ there is a constant $C$ such that
$R_\Phi(n)\le \twr_3(Cn)$.
\end{theorem}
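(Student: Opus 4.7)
The plan is to prove Theorem~\ref{t:bm} by reducing, through several rounds of Erd\H{o}s--Szekeres-type thinning, to a situation where the semialgebraic predicate $\Phi$ is governed by a combinatorial coloring of low arity (pairs or triples of indices), to which the classical Ramsey theorem applies with a bound independent of~$k$. The three exponentials in $\twr_3(Cn)$ would then arise from (i)~a monotonization step, (ii)~a rapid-growth thinning step, and (iii)~a final Ramsey step on pairs or triples.

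I would begin by decomposing $\Phi = \phi(A_1, \ldots, A_t)$ with each $A_\ell$ the sign of some polynomial $f_\ell(x_1, \ldots, x_k)$, and reduce to the case of a single inequality $f(x_1, \ldots, x_k) \ge 0$ by iteratively finding a subsequence on which each $A_\ell$ is constant across all ordered $k$-tuples. Next I would apply the one-dimensional Erd\H{o}s--Szekeres theorem to extract a monotone (say increasing) subsequence, and then iteratively thin it so that each new term is separated from the previous ones by a factor growing fast enough to outstrip any monomial of $f$. On such a spread-out subsequence, I expect that the sign of $f(p_{i_1}, \ldots, p_{i_k})$ for $i_1 < \cdots < i_k$ is controlled by a single \emph{dominant monomial} of $f$, whose identity depends on the $k$-tuple only through low-arity comparisons between the gaps separating consecutive entries. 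Encoding the choice of dominant monomial as a coloring of pairs or triples of indices and invoking $R_2$ or $R_3$ then produces a $\Phi$-homogeneous subsequence.

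The main obstacle, and the key combinatorial content of the theorem, is showing that the dependence of the dominant monomial on the $k$-tuple can be captured by a coloring of low arity independent of $k$, rather than by a coloring of $k$-tuples themselves. Without this reduction in arity, the naive bound would degrade to $\twr_k$, defeating the point of the theorem. Once the low-arity reduction is established, accounting is straightforward: each Erd\H{o}s--Szekeres/thinning round costs a logarithm, and the final Ramsey bound on triples contributes $2^{2^{\Theta(n)}}$, yielding the claimed $\twr_3(Cn)$ uniformly in~$k$.
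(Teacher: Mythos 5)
First, note that this paper does not prove Theorem~\ref{t:bm} at all: it is quoted from Bukh and the second author \cite{BukhMat}, whose argument proceeds by (in the paper's own words) ``very different methods,'' so there is no in-paper proof to match and your sketch must stand on its own. It does not. The step you yourself flag as the main obstacle --- that after thinning, the sign of each $f_\ell$ on every $k$-tuple is decided by a dominant monomial whose identity is captured by comparisons of bounded arity, uniformly in $k$ --- is precisely the theorem's hard content, and it is not proved. Worse, the thinning that is supposed to make the dominant-monomial picture available is not available for arbitrary inputs: from an arbitrary increasing sequence one cannot extract a subsequence of prescribed length with rapid multiplicative growth (take all points in $[1,2]$, or a convergent sequence). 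In such cases the relevant quantities are the gaps between points, which can themselves cluster at many nested scales, and controlling the predicate across this nested-scale structure is exactly where the real work (and the second exponential) in \cite{BukhMat} lies. Relatedly, the accounting ``each thinning round costs a logarithm'' is not meaningful: the number of terms one can extract with, say, geometric growth depends on the spread of the values, not on $N$, so it cannot enter a bound of the form $\twr_3(Cn)$.

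There is also a quantitative flaw in your preliminary reduction to a single inequality. Extracting nested subsequences, one per polynomial $f_1,\dots,f_t$, composes the single-polynomial bound $t$ times, and composing $\twr_3(C\,\cdot\,)$ with itself $t-1$ times yields a tower of height roughly $2t+1$, not height $3$ (the height would then depend on the description complexity of $\Phi$, and implicitly on $k$, defeating the statement). To stay at height $3$ one must treat all $t$ sign conditions simultaneously, e.g.\ via a single coloring with $2^t$ colors of bounded arity --- but that again presupposes the unproved structural lemma. So the proposal gestures at a reasonable strategy (monotonize, exploit scale separation, finish with low-arity Ramsey), but the two concrete steps it relies on fail as stated, and the key lemma that would make the strategy work is missing.
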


This opens an interesting possibility, namely, that the Ramsey function
of $d$-dimensional semialgebraic predicates might be bounded
by a tower whose height depends only on $d$ (and not on $k$),
but currently this question is wide open. But certainly it
makes it interesting to study the dependence of the Ramsey function
on the dimension.

\heading{Lower bounds. } The classical Erd\H{o}s--Szekeres result
on subsequences in convex position
\cite{es-cpg-35} supplies a lower bound of $2^{\Omega(n)}=\twr_2(\Omega(n))$
in the setting of Theorem~\ref{t:ub} for $k=3$ and $d=2$.
The first two authors  \cite{highes-aim} constructed a
reasonably natural\footnote{By a ``natural'' predicate we mean here
one that has a clear geometric meaning and
seems reasonable to study in its own right, not only as a lower-bound
example for a general result. In the case of \cite{highes-aim},
assuming that the considered four points $x_1,\ldots,x_4$ are
numbered in the order of increasing first coordinates, the predicate
asserts that $x_4$ lies above the graph of the unique
quadratic polynomial passing through
$x_1,x_2,x_3$.}
 $4$-ary planar semialgebraic $\Phi$ with $R_\Phi(n)\ge
\twr_3(\Omega(n))$. This shows that for $k\le 4$, the height
of the tower in Theorem~\ref{t:ub} is optimal in terms of~$k$.

For $d=1$, \cite{BukhMat} provided a one-dimensional $5$-ary $\Phi$
with $R_\Phi(n)\ge \twr_3(\Omega(n))$, matching Theorem~\ref{t:bm}.
Conlon et al.~\cite{cfpss-semialg} improved the arity to~$4$,
which is optimal in view of Theorem~\ref{t:ub}. 

Moreover, they obtained a lower bound almost matching Theorem~\ref{t:ub}
for an arbitrary $k$. Namely, for every $k\ge 4$ they constructed
a $d$-dimensional $k$-ary semialgebraic predicate $\Phi$
such that $R_\Phi(n) \ge \twr_{k-1}(\Omega(n))$. 
However, the dimension $d$ in their construction is
large:  $d = 2^{k-4}$.

\heading{A stronger lower bound. } 
In this paper we first modify (and simplify) the
lower bound construction of Conlon et al.~\cite{cfpss-semialg},
obtaining examples in considerably lower dimension.

\begin{theorem}\label{t:lb}
For every $d \ge 2$ there is a $d$-dimensional
semialgebraic predicate $\Phi$ of arity $k=d+3$ 
such that 
\[ R_\Phi(n) \ge \twr_{k-1}(\Omega(n)).
\]
\end{theorem}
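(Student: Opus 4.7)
My plan is to prove Theorem~\ref{t:lb} by induction on $d$ via a \emph{semialgebraic stepping-up lemma}. The base case $d=2$, $k=5$ is supplied by the construction of Conlon, Fox, Pach, Sudakov, and Suk, whose general examples happen to live in dimension $2^{k-4}=2$ precisely when $k=5$. For the inductive step I will show: given a $k$-ary semialgebraic predicate $\Phi$ on $\R^d$ realizing $R_\Phi(n)\ge\twr_{k-1}(\Omega(n))$ via an explicit sequence $P=(p_1,\dots,p_N)$, one can build a $(k+1)$-ary semialgebraic predicate $\Phi'$ on $\R^{d+1}$ together with a sequence $P'=(P_1,\dots,P_{2^N})$ witnessing $R_{\Phi'}(n)\ge\twr_k(\Omega(n))$. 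Iterating this step from the base case adds one to both $d$ and $k$ simultaneously, producing the desired family with arity exactly $k=d+3$.

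The stepping-up follows the combinatorial template of Erd\H{o}s--Hajnal--Rado, adapted to the semialgebraic setting. I identify the indices $1,\dots,2^N$ with binary strings of length $N$ and, for any $(k+1)$-tuple $i_1<\cdots<i_{k+1}$, associate the \emph{top-difference vector} $(\delta_1,\dots,\delta_k)$, where $\delta_j\in[N]$ is the position of the highest bit in which $i_j$ and $i_{j+1}$ disagree. The predicate $\Phi'$ is designed so that when $\delta_1,\dots,\delta_k$ is monotone its value equals essentially $\Phi(p_{\delta_1},\dots,p_{\delta_k})$, while otherwise it is a fixed answer depending only on the local up/down pattern of the $\delta_j$'s. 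A standard argument then shows that a $\Phi'$-homogeneous subsequence of length $n'$ forces the $\delta$-profiles to be monotone in a restrictive sense, which in turn extracts a $\Phi$-homogeneous subsequence of length $\Omega(\log n')$ in $P$; this exponential loss yields the desired tower-jump.

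The semialgebraic realization places each $P_i$ as $(p_{h(i)},\,y_i)\in\R^{d+1}$, where $p_{h(i)}$ is one of the original points selected according to the leading block of bits of $i$, and $y_i\in\R$ is a scale coordinate whose magnitudes are separated by a tower in $d$, so that the geometry of the last coordinate faithfully encodes the binary tree of indices. The extreme scale separation is what allows a \emph{fixed} Boolean combination of polynomial inequalities to recover each $\delta_j$ from the coordinates of the $k+1$ input points, independently of $N$, and hence to compose with $\Phi$ into the desired $\Phi'$. Crucially, only one new coordinate is needed to encode the binary-tree structure, which is where this construction saves over the doubling of dimension used in \cite{cfpss-semialg}.

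The main obstacle, as I see it, is arranging the geometry so that both the extraction of the $\delta_j$ and the inheritance of $\Phi$-values work for \emph{every} admissible index tuple, not only for generic ones. Degenerate configurations---for instance, tuples whose leading bits cluster so that two scale coordinates become comparable inside the polynomial inequalities---are exactly where a fixed-complexity semialgebraic extraction can fail, and preventing this with only a single added dimension requires a careful tuning of the scale-coordinate function and of the embedding $h$. Verifying that the resulting $\Phi'$ enforces monotonicity of $\delta$-sequences on every $\Phi'$-homogeneous subsequence, and that this monotonicity can be pulled back to a genuine $\Phi$-homogeneous subsequence of $P$, will constitute the technical core of the proof.
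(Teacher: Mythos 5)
Your overall strategy is the same as the paper's: induct on $d$, stepping up dimension and arity by one simultaneously via an Erd\H{o}s--Hajnal-type stepping-up adapted to the semialgebraic setting, starting from a base case taken from Conlon et al.\ (the paper starts at $d=1$, $k=4$ rather than your $d=2$, $k=5$; this difference is immaterial). However, two of your key steps do not hold as stated. The first is quantitative: you claim that a $\Phi'$-homogeneous subsequence of length $n'$ only yields a $\Phi$-homogeneous subsequence of length $\Omega(\log n')$ and that ``this exponential loss yields the desired tower-jump.'' It does the opposite. With only logarithmic extraction, the absence of $\Phi$-homogeneous subsequences of length $n$ in $P$ rules out $\Phi'$-homogeneous subsequences only of length $2^{O(n)}$ in the new sequence of length $2^N=\twr_k(\Omega(n))$, i.e.\ it gives $R_{\Phi'}(m)\ge\twr_k(\Omega(\log m))$, which is essentially $\twr_{k-1}(m^{\Omega(1)})$ --- no gain in tower height, and iterating never reaches $\twr_{k-1}(\Omega(n))$. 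What the stepping-up argument actually provides, and what you must prove, is \emph{linear} extraction: homogeneity under the stepping-up coloring forces the sequence of consecutive $\delta$'s to be monotone apart from a single turn, so $n'$ points yield roughly $n'/2$ distinct monotone $\delta$'s forming a homogeneous set in $[N]$ (cf.\ Lemma~\ref{lem:step-up}: no homogeneous subset of size $2n+k-4$). The tower jump comes from the length going from $N$ to $2^N$ while the homogeneity threshold only doubles.

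The second gap is the geometric realization, which is precisely the crux and which you explicitly leave unresolved. Your embedding $P_i=(p_{h(i)},y_i)$ with $h(i)$ determined by a ``leading block'' of bits cannot work as described: the predicate $\Phi'$ must evaluate $\Phi$ on the actual $d$-dimensional points $p_{\delta_1},\dots,p_{\delta_k}$, and when the indices of a tuple agree on the leading block, these points are not determined by the first components $p_{h(i_j)}$ of the inputs at all --- they would have to be decoded from the single scalars $y_{i_j}$ by a fixed-complexity Boolean combination of polynomial inequalities, and you give no mechanism for this. The paper's solution is to let each new point carry the entire $\eps$-weighted sum $q_\balpha=\sum_{i=1}^N\alpha_i\eps^i(1,p_{i,1},\dots,p_{i,d})\in\R^{d+1}$ and to recover $p_{\delta(\balpha,\bbeta)}$ as the $\eps\to0$ limit of the fixed rational map $\sdelta(q_\balpha,q_\bbeta)$ (the difference quotient), with robustness of $\Phi$ and an order-inducing binary predicate $\prec$ carried through the induction so that both the evaluation of $\Phi$ at these approximate points and the comparison of the $\sdelta_\ell$'s (needed to detect monotonicity of the $\delta$'s) are legitimate semialgebraic conditions of bounded complexity (Proposition~\ref{p:lb-i}). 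Since your proposal defers exactly this decoding step as the ``technical core,'' it is not yet a proof; repairing it essentially forces you to replace your embedding by something like the weighted-sum construction above.
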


The proof is given in Section~\ref{s:lower}.
In view of Theorem~\ref{t:bm}, the dependence of the tower height
on the dimension in this result might even be optimal.




\heading{Super-order-type homogeneous subsequences. }
Next, we provide a natural geometric Ramsey-type theorem in $\R^d$
in which the Ramsey function is a  tower of height~$d$. 

Let $T=(p_1,\ldots,p_{d+1})$ be an ordered $(d+1)$-tuple of points
in $\R^d$. We recall that the \emph{sign} (or \emph{orientation})
of $T$ is 
defined as $\sgn\det M$, where the $j$th column of the $(d+1)\times (d+1)$ 
matrix $M$ is $(1,p_{j,1},p_{j,2},\ldots,p_{j,d})$.
 Geometrically, the sign
is $+1$ if the $d$-tuple
of vectors $p_1-p_{d+1},\ldots,p_{d}-p_{d+1}$ forms a positively
oriented basis of $\R^d$, it is $-1$ if it forms a negatively
oriented basis, and it is $0$ if these vectors are linearly dependent.

We call a sequence $(p_1,p_2,\ldots,p_n)$
of points in $\R^d$ in general position \emph{order-type homogeneous}
if all $(d+1)$-tuples $(p_{i_1},\ldots,p_{i_{d+1}})$,
$i_1<\cdots<i_{d+1}$, have the same sign (which is nonzero,
by the general position assumption). Such sequences are of interest
from various points of view: For example, the convex hull of an order-type
homogeneous sequence is combinatorially equivalent to
a cyclic polytope (see, e.g., \cite{z-lp-94} for background). 
They can also be viewed as \emph{discrete Chebyshev systems};
see  \cite{KarlinStudden}, as well as a remark below.

By Ramsey's theorem, every sufficiently long point sequence 
in general position contains an order-type homogeneous 
subsequence of length $n$ (we color every $(d+1)$-tuple by its
sign). Letting $\OT_d(n)$ be the corresponding
Ramsey function, we obtain $\OT_d(n)\le\twr_d(n^C)$ from Theorem~\ref{t:ub}.
This has recently been improved to
$\OT_d(n)\le\twr_d(O(n))$ by Suk~\cite{Suk-OT}.

This upper bound is essentially tight. Until recently this was proved only for
$d=2$ (by \cite{es-cpg-35}) and $d=3$ \cite{highes-aim}. 
As will be explained next, our results, together with
a recent paper of
B\'ar\'any, P\'or, and the second author \cite{curve-bmp},
 yield a matching lower bound for all~$d$.

In the present paper we prove a lower bound for a somewhat stronger notion
of homogeneity. Namely, let  $\pi_j\colon \R^d \to \R^j$
denote the projection on the first $j$ coordinates. 
We say that a point sequence
$P=(p_1, \dotsc, p_n)$ in
$\R^d$ is \emph{super-order-type homogeneous} if,
for each $j=1,2,\ldots,d$, the
projected sequence $\pi_j(P)=(\pi_j(p_1),\ldots\pi_j(p_n))$
is order-type homogeneous. 

By iterated application of Ramsey's theorem, it can be seen
that every sufficiently long point sequence in general position in $\R^d$
contains a super-order-type homogeneous subsequence of length~$n$.
Let $\COT_d(n)$ be the corresponding Ramsey function. We have the following
lower bound, proved in Section~\ref{s:lb-sup}:

\begin{theorem}\label{t:super}
For every $n\ge d+1$,   $\COT_d(n)\ge \twr_d(n-d)$.
\end{theorem}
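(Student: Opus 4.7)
The plan is to proceed by induction on $d$. The base case $d=1$ is immediate, since $\twr_1(n-1)=n-1$ and no sequence of length less than $n$ contains any $n$-subsequence at all.

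For the inductive step, I would use a moment-curve construction to reduce the $d$-dimensional problem to a one-dimensional Erd\H{o}s--Szekeres-type question. Choose reals $t_1<t_2<\cdots<t_N$ and set
\[
p_i \;=\; (t_i,\,t_i^2,\,\ldots,\,t_i^{d-1},\,z_i)\;\in\;\R^d,\qquad i=1,\ldots,N,
\]
where $(z_i)$ is a one-dimensional sequence of length $N=\twr_d(n-d)-1$ to be constructed. For every $j\le d-1$, the projection $\pi_j(p_i)=(t_i,t_i^2,\ldots,t_i^j)$ lies on the moment curve in $\R^j$, so every $(j+1)$-tuple of $\pi_j(P)$ has positive Vandermonde orientation; in particular every subsequence of $\pi_j(P)$ is automatically order-type homogeneous. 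Hence super-order-type homogeneity of an $n$-subsequence of $P$ reduces to order-type homogeneity in $\R^d$ alone, and a standard Vandermonde--divided-difference identity shows that the orientation of $(p_{i_1},\ldots,p_{i_{d+1}})$ equals the sign of the $d$-th divided difference $z[t_{i_1},\ldots,t_{i_{d+1}}]$.

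It therefore suffices to construct a 1D sequence $(z_1,\ldots,z_N)$ of length $\twr_d(n-d)-1$ containing no $n$-subsequence whose $(d+1)$-point divided differences all have the same sign, i.e., neither an $n$-$d$-cup nor an $n$-$d$-cap. I would prove this 1D lemma by a parallel induction on $d$, with the base $d=1$ (monotone-subsequence avoidance) trivial. For the inductive step, given a 1D sequence of length $M=\twr_{d-1}(n-d)-1$ avoiding $(n-1)$-$(d-1)$-cups and caps (supplied by the induction), a ``discrete primitive plus binary concatenation'' construction would produce a sequence of length $2^{M+1}-1=\twr_d(n-d)-1$ avoiding $n$-$d$-cups and caps: discrete integration shifts the order of divided differences up by one (up to positive factors that do not affect signs), and a binary-tree concatenation of positive- and negative-sign copies blows up the length exponentially while ensuring that any long monochromatic $d$-cup or $d$-cap in the new sequence forces a long monochromatic $(d-1)$-cup or $(d-1)$-cap in one of the copies, contradicting the inductive hypothesis.

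The main obstacle is the inductive step of the 1D lemma: the discrete-primitive/divided-difference correspondence must be formulated carefully so that the sign of a $d$-th divided difference of the primitive matches the sign of a $(d-1)$-th divided difference of the original sequence at the relevant nodes, and the binary concatenation must be arranged so that the exponential length blow-up is genuine and no unwanted monochromatic subsequences emerge across junction boundaries. The moment-curve reduction of the previous paragraph and the $d=1$ base are both routine, so essentially all the work concentrates in this one-dimensional iterated cup--cap construction, which mirrors the $d$-fold tower structure of the target bound.
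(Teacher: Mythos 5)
Your reduction to dimension one is valid as far as it goes: on a graph over the moment curve, all projections $\pi_j$, $j\le d-1$, are automatically order-type homogeneous, and the orientation of a $(d+1)$-tuple is the sign of the $d$-th divided difference of the $z$-values (Vandermonde factor being positive), so super-order-type homogeneity of a subsequence is equivalent to all its $d$-th divided differences having one sign. But this means your entire proof rests on the ``1D lemma'': a sequence of planar points $(t_i,z_i)$ of length about $\twr_d(n-d)$ with no $n$-term subsequence that is $d$-th order monotone. That is precisely the higher-order Erd\H{o}s--Szekeres lower-bound problem for the natural divided-difference predicate, which is \emph{not} known for $d\ge 4$ (the paper of the first two authors cited as \cite{highes-aim} settles only the case of order $3$, and both Conlon et al.\ and the present paper escape to growing dimension exactly because no one knows how to build such towers inside the plane). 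So you have reduced the theorem to a statement at least as hard as the one being proved, and your sketch of its inductive step is where the gap lies: (a) for non-consecutive nodes, the sign of a $d$-th divided difference of a discrete primitive is \emph{not} given by a $(d-1)$-th divided difference of the original values at sequence nodes (the mean-value representation involves intermediate points/averages, so the claimed sign correspondence does not hold tuple-by-tuple); and (b) you give no mechanism for controlling $(d+1)$-tuples that straddle several concatenated copies, which is exactly the place where a stepping-up argument needs more room than a single extra coordinate's worth of freedom repeated $d$ times inside the plane.

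For contrast, the paper does not try to keep the ambient dimension fixed: it proves the theorem by the same $\eps$-perturbation stepping-up map as in Proposition~\ref{p:lb-i}, building $P_{d+1}(n)$ from $P_d(n-1)$ by adding one coordinate per tower level, so that $\sdelta(q_\balpha,q_\bbeta)$ converges to the point $p_{\delta(\balpha,\bbeta)}$ of the previous level; the combinatorial heart is Proposition~\ref{p:sot-sm} (super-monotonicity $\Leftrightarrow$ super-order-type homogeneity, via the determinant identities of Lemmas~\ref{lem:matrices} and~\ref{lem:sigma}), which converts a long homogeneous subsequence upstairs into one downstairs. The new coordinate is what lets tuples spanning different ``copies'' be analyzed through $\sdelta$ and robustness; your construction has no analogue of this, and without it the inductive step of your 1D lemma is an unproven (and, as stated, essentially open) claim rather than a routine verification.
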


In $\cite{curve-bmp}$ it is proved that
$\COT_d(n)\le \OT_d(C_d n)$ for every $d$, where $C_d$ is a suitable
constant. Thus, we also obtain a lower bound for
$\OT_d$, which is tight up to a multiplicative constant
in front of $n$:
\begin{corol}
We have $\OT_d(n) \geq \twr_d(\Omega(n))$.
\end{corol}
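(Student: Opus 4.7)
The plan is to chain together the lower bound on $\COT_d$ provided by Theorem~\ref{t:super} with the upper bound $\COT_d(n)\le \OT_d(C_d n)$ from \cite{curve-bmp}. Both inequalities are already in hand, so no new combinatorial or geometric argument is needed; the only work is to rearrange them into the claimed form.

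Concretely, I would proceed as follows. First, invoke the inequality $\COT_d(n)\le \OT_d(C_d n)$ from \cite{curve-bmp}, which expresses the fact that any sufficiently long order-type homogeneous sequence in $\R^d$ contains a long super-order-type homogeneous subsequence (with only a constant factor loss in length). Combining this with Theorem~\ref{t:super} yields
\[
\OT_d(C_d n) \;\ge\; \COT_d(n) \;\ge\; \twr_d(n-d).
\]
Now substitute $m = C_d n$, i.e.\ $n = m/C_d$, valid for $m \ge C_d(d+1)$. This gives
\[
\OT_d(m) \;\ge\; \twr_d\!\left(\tfrac{m}{C_d} - d\right).
\]
Since $d$ is fixed, the argument of $\twr_d$ on the right-hand side is $\Omega(m)$, and hence $\OT_d(m) \ge \twr_d(\Omega(m))$, as claimed. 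For small values of $m$ (below the threshold where the substitution is valid), the inequality can be ensured trivially by adjusting the hidden constant in $\Omega(\cdot)$, since $\twr_d$ applied to a nonpositive argument can be interpreted as a constant.

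There is essentially no obstacle in this argument: the two nontrivial ingredients (Theorem~\ref{t:super} and the comparison between $\OT_d$ and $\COT_d$ from \cite{curve-bmp}) already do all the work. The only minor point to be careful about is that the $\Omega(\cdot)$ notation hides a constant that depends on $d$ (through both $C_d$ and the additive $-d$), which is consistent with the conventions of the paper since $d$ is treated as a constant throughout.
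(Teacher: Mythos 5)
Your argument is correct and is exactly the paper's route: the corollary is deduced by chaining Theorem~\ref{t:super} with the inequality $\COT_d(n)\le \OT_d(C_d n)$ from \cite{curve-bmp} and absorbing the constant $C_d$ and the additive $-d$ into the $\Omega(\cdot)$. Your extra remarks about the substitution threshold and the $d$-dependence of the hidden constant are fine and consistent with the paper's conventions.
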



\heading{Chebyshev systems. } Let $A$ be a linearly ordered set
of at least $k+1$ elements. A (real) \emph{Chebyshev system} (also spelled
Tchebycheff) on $A$ is a system of continuous real functions
 $f_0,f_1,\ldots,f_k\:A\to\R$ such that for every choice of elements
$t_0<t_1<\cdots<t_k$ in $A$, the matrix $(f_i(t_j))_{i,j=0}^k$ has
a (strictly) positive determinant. Chebyshev systems are mostly
considered for $A$ an interval in $\R$ with the natural ordering, 
the basic example being $f_i(t)=t^i$, but the case
of finite $A$ (\emph{discrete} Chebyshev systems) has been investigated
as well. The functions $f_0,\ldots,f_k$ as above form a
\emph{Markov system}, also called a \emph{complete Chebyshev system},
if $f_0,\ldots,f_i$ is a Chebyshev system for every $i=1,2,\ldots,k$.
Chebyshev systems are of considerable importance in several areas,
such as approximation theory or the theory of finite moments;
see the classical monograph of Karlin and Studden 
\cite{KarlinStudden} or, e.g., Carnicer, Pe\~na, and Zalik
\cite{CPZ} for a more recent study. 

In our setting, it is easy to check that
an $n$-point order-type homogeneous sequence
$P=(p_1,\ldots,p_n)$ in $\R^d$ gives rise to a Chebyshev system
on $A=\{1,2,\ldots,n\}$, by setting $f_j(i)=p_{i,j}$
for $j=1,2,\ldots,d$ and $f_0\equiv 1$ (possibly with changing
the sign for one of the $f_i$, if
the signs of the $(d+1)$-tuples in $P$ are negative),
and conversely, from a discrete Chebyshev system with
$f_0\equiv 1$ we obtain an order-type homogeneous sequence.
Similarly, super-order-type homogeneous sequences correspond
to discrete Markov systems.

\section{Lower bound for semialgebraic predicates in a small dimension}
\label{s:lower}

Here we prove Theorem \ref{t:lb}. As was remarked in the introduction,
our construction can be regarded as a modification of that of
Conlon et al.~\cite{cfpss-semialg}, but we give a self-contained
presentation.

\heading{Stepping up. } The proof proceeds by induction on $d$;
having constructed a suitable $d$-dimensional
 $k$-ary semialgebraic predicate 
and an $N$-point sequence $P\subset \R^d$ without long $\Phi$-homogeneous
subsequences, we produce a $(d+1)$-dimensional
$(k+1)$-ary semialgebraic predicate $\Psi$
and a $2^N$-point sequence $Q\subset\R^{d+1}$ without
long $\Psi$-homogeneous subsequences. 

Our basic tool is a classical stepping-up lemma of Erd\H{o}s and
Hajnal, see e.g.~\cite{grs-rt-90} or \cite{conlon-al}.
We first recall it in the standard combinatorial
setting, and then we will work on transferring it to a semialgebraic
setting.

Let $I=[N]:=\{1,2,\ldots,N\}$, and let $\chi\:\binom{I}{k}\to \{0,1\}$
be a given two-coloring of all $k$-tuples of $I$. Let $J=\{0,1\}^N$
be the set of all binary vectors of length $N$ ordered lexicographically.

We define a coloring $\chi'\:\binom{J}{k+1}\to\{0,1\}$ of all $(k+1)$-tuples
of $J$. First we introduce a function $\delta\:J\times J\to I$
by 
\[\delta(\balpha,\bbeta)=\min\{i\in I:\alpha_i\ne\beta_i\}.
\]
For a $(k+1)$-tuple $(\balpha_1,\ldots,\balpha_{k+1})$
of binary vectors, $\balpha_1<_{\rm lex}\cdots
<_{\rm lex} \balpha_{k+1}$, we write 
$\delta_\ell:=\delta(\balpha_\ell,\balpha_{\ell+1})$.
Then $\chi'$, the \emph{stepping-up coloring} for $\chi$, is given by
\begin{equation}\label{def-stepup}
\chi'(\balpha_1,\ldots,\balpha_{k+1}):=\alterdef{
\chi(\delta_1,\ldots,\delta_k)& \text{if }
        \delta_1 < \dotsb < \delta_k \text{ or } \delta_1 > \dotsb >
        \delta_k\\
1& \text{if } \delta_1 < \delta_2 > \delta_3\\
0& \text{otherwise.}
}
\end{equation}

Now the stepping-up lemma can be stated as follows.

\begin{lemma}[Stepping-up lemma]\label{lem:step-up}
If $\chi$ is a two-coloring of the $k$-tuples of  $I:=[N]$
under which $I$ has no homogeneous subset of size $n$,
then, under the stepping-up coloring $\chi'$,  the set $J=\{0,1\}^N$ contains
no homogeneous subset of size $2n+k-4$.
\end{lemma}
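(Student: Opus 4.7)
The plan is a proof by contradiction: assume we have a $\chi'$-homogeneous subset $S=\{\balpha_1<_{\mathrm{lex}}\cdots<_{\mathrm{lex}}\balpha_m\}\subseteq J$ of some color $c\in\{0,1\}$ with $m=2n+k-4$, and I will extract from it a $\chi$-homogeneous subset of $I=[N]$ of size $n$, contradicting the hypothesis on $\chi$. The candidates are the separators $\delta_i:=\delta(\balpha_i,\balpha_{i+1})\in I$ for $i=1,\ldots,m-1$.

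First I would establish two elementary properties of $\delta$, both immediate from the definition of lex order. For any $\bbeta_1<_{\mathrm{lex}}\bbeta_2<_{\mathrm{lex}}\bbeta_3$ in $J$: (i) $\delta(\bbeta_1,\bbeta_3)=\min\{\delta(\bbeta_1,\bbeta_2),\delta(\bbeta_2,\bbeta_3)\}$, and (ii) $\delta(\bbeta_1,\bbeta_2)\ne\delta(\bbeta_2,\bbeta_3)$, for if both equaled some $j$ then the $j$-th bit of the three vectors would have to read $0,1,0$, violating the lex order. Iterating (i), for any indices $1\le j_1<\cdots<j_{k+1}\le m$ the $k$ consecutive separators of the $(k+1)$-subtuple $(\balpha_{j_1},\ldots,\balpha_{j_{k+1}})$ of $S$ are $\delta(\balpha_{j_s},\balpha_{j_{s+1}})=\min\{\delta_i:j_s\le i<j_{s+1}\}$ for $s=1,\ldots,k$; and by (ii) consecutive entries $\delta_i,\delta_{i+1}$ are distinct.

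The heart of the argument is to use $\chi'$-homogeneity to locate a strictly monotone segment $\delta_p,\delta_{p+1},\ldots,\delta_q$ of length at least $n$ in the sequence $(\delta_i)$. Suppose first $c=0$. If there were a ``peak'' $\delta_{i-1}<\delta_i>\delta_{i+1}$ at some interior position $2\le i\le m-k+1$, I would form the $(k+1)$-subtuple $T=(\balpha_{i-1},\balpha_i,\balpha_{i+1},\balpha_{i+2},\ldots,\balpha_{i+k-1})$; by (i) its first three consecutive separators are exactly $\delta_{i-1},\delta_i,\delta_{i+1}$, matching the pattern $\delta_1<\delta_2>\delta_3$ of the second clause of~\eqref{def-stepup}, so $\chi'(T)=1\ne c$, a contradiction. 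Hence the prefix $\delta_1,\ldots,\delta_{m-k+2}$ of length $2n-2$ has no peak, and by (ii) it must be ``V-shaped,'' i.e.\ strictly decreasing then strictly increasing (with either arm possibly empty). Its longer monotone arm has length at least $\lceil(2n-1)/2\rceil=n$, furnishing the desired monotone segment. For $c=1$, the symmetric argument using the ``otherwise'' clause of~\eqref{def-stepup} rules out interior ``valleys'' $\delta_{i-1}>\delta_i<\delta_{i+1}$ and yields an inverted-V shape with the same length bound.

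Finally, I would promote the monotone segment to a $\chi$-homogeneous set. If the segment is strictly increasing, $\delta_p<\delta_{p+1}<\cdots<\delta_q$ with $q-p+1\ge n$, then for any $p\le j_1<\cdots<j_k\le q$ the $(k+1)$-subtuple $T'=(\balpha_{j_1},\balpha_{j_2},\ldots,\balpha_{j_k},\balpha_{j_k+1})$ has, by (i) and the monotonicity of the segment, consecutive separators $\delta_{j_1}<\delta_{j_2}<\cdots<\delta_{j_k}$; the monotone clause of~\eqref{def-stepup} then gives $\chi'(T')=\chi(\delta_{j_1},\ldots,\delta_{j_k})$, which equals $c$ by $\chi'$-homogeneity. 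A strictly decreasing segment is handled identically with $T'=(\balpha_{j_1},\balpha_{j_1+1},\balpha_{j_2+1},\ldots,\balpha_{j_k+1})$, whose separators collapse via (i) to the required strictly decreasing sequence. In either case, $\{\delta_p,\ldots,\delta_q\}\subseteq I$ is a $\chi$-homogeneous subset of size $\ge n$, the desired contradiction. The main obstacle I foresee is the middle step: while ruling out a peak (or valley) by a single $(k+1)$-subtuple is short, one must be careful at boundary positions $i>m-k+1$, where a consecutive padding no longer fits inside $S$, and to verify via the $\min$-formula that the three-element peak pattern is preserved under padding by trailing elements; this boundary bookkeeping is precisely what forces the additive correction in the threshold $m=2n+k-4$.
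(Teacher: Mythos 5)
Your proof is correct and follows the same standard Erd\H{o}s--Hajnal stepping-up argument that the paper itself relies on (the paper omits the proof, citing \cite{cfpss-semialg} and \cite[Sec.~4.7]{grs-rt-90}): you rule out peaks/valleys among the $\delta_i$ via $(k+1)$-tuples of consecutive elements, extract a monotone run of length $\ge n$ from the peak-free (resp.\ valley-free) prefix of length $2n-2$, and promote it to a $\chi$-homogeneous set using the min-collapse property of $\delta$. The only (harmless) caveat is the implicit assumption $k\ge 3$, needed so that the padded index $j_k+1\le m$ in the promotion step fits inside $S$; the paper applies the lemma only for $k\ge 4$, so this is fine.
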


The proof is not very complicated and it can be found, e.g., in
\cite{cfpss-semialg} or \cite[Sec.~4.7]{grs-rt-90}.

\heading{Semialgebraic stepping up. } Now let $\Phi$ be a $d$-dimensional
$k$-ary
semialgebraic predicate, and let $P=(p_1,\ldots,p_N)$
be a point sequence in $\R^d$ indexed by the set $I=[N]$ as above.
Let $\chi=\chi_\Phi$ be the coloring of $k$-tuples of $I$ induced
by $\Phi$; that is, for $i_1<\cdots<i_k\in I$, 
$\chi(i_1,\ldots,i_k)$ is $1$ or $0$ depending
on whether $\Phi(p_{i_1},\ldots,p_{i_k})$ holds or not.

We want to construct a sequence $Q$ in $\R^{d+1}$ indexed by $J=\{0,1\}^N$
and a $(d+1)$-dimensional $(k+1)$-ary semialgebraic predicate $\Psi$ 
such that the coloring induced
by $\Psi$ on $\binom{J}{k+1}$ is exactly the stepping-up coloring $\chi'$.
For our construction, we need to assume simple additional properties
of $\Phi$ and $P$, which we now introduce.

Let $P=(p_1,\ldots,p_N)$ be a sequence of points in $\R^d$.
We call a predicate $\Phi$ \emph{robust}\footnote{Conlon et 
al.~\cite{cfpss-semialg}
use the term $\eta$-deep.}  on $P$
if there is some $\eta>0$ such that
$\Phi(p_{i_1},\ldots,p_{i_k})\Leftrightarrow \Phi(p'_{i_1},\ldots,p'_{i_k})$
whenever $1\le i_1<\cdots<i_k\le N$ and $\|p_{i_j}-p'_{i_j}\|\le\eta$
for all $j=1,2,\ldots,k$.  

In defining the new predicate $\Psi$, we will also need to
use the linear ordering of the points of $P$.
We thus say that a binary semialgebraic predicate $\prec$
on $\R^d$ is \emph{order-inducing} for $P$ if
$p_i\prec p_j$ iff $i<j$, for $i,j=1,2,\ldots,N$.

Now we can state our semialgebraic stepping-up lemma.

%

\begin{prop}[Semialgebraic stepping-up]\label{p:lb-i}
Let $\Phi$ be a $d$-dimensional $k$-ary semialgebraic predicate  and let
$\prec$ be a $d$-dimensional binary semialgebraic predicate. Then
there are a $(d+1)$-dimensional $(k+1)$-ary semialgebraic predicate 
$\Psi$ and a $(d+1)$-dimensional binary 
semialgebraic predicate $\prec'$ with the following
property. 

Let $P=(p_1,\ldots,p_N)$ be a point sequence in $\R^d$ such that
$\prec$ is order-inducing on $P$ and both $\Phi$ and $\prec$
are robust on $P$, and let $\chi_\Phi$ be the coloring
of $k$-tuples of $I=[N]$ induced by $\Phi$. 
Then there is a point sequence $Q=(q_\balpha:\balpha\in J=\{0,1\}^N)$
such that $\prec'$ is order-inducing on $Q$ (w.r.t.\ the lexicographic
ordering of $J$), both $\Psi$ and $\prec'$ are robust on $Q$,
and the coloring $\chi_\Psi$ induced on the $(k+1)$-tuples
of $J$ by $\Psi$ is the stepping-up coloring for~$\chi_\Phi$.
\end{prop}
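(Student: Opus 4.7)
The plan is to embed each binary vector $\balpha\in J=\{0,1\}^N$ as the lifted point $q_\balpha=\sum_{i=1}^N\alpha_i M^{-i}(p_i,1)\in\R^{d+1}$, where $M$ is a large constant to be fixed later and $(p_i,1)$ denotes $p_i$ with an extra coordinate equal to $1$. Since all coefficients are positive, the last coordinate $(q_\balpha)_{d+1}=\sum_i\alpha_i M^{-i}$ is a standard $M$-ary expansion that is strictly monotone in the lexicographic order on $\balpha$. Consequently $\prec'$ is defined by comparing last coordinates, $y\prec'y'$ iff $y_{d+1}<y'_{d+1}$; it is order-inducing on $Q$, and it is robust on $Q$ because the last coordinates of distinct $q_\balpha$'s differ by at least a fixed positive amount once $M\ge 2$.

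For a pair $\balpha<_{\rm lex}\bbeta$ with $\delta=\delta(\balpha,\bbeta)$, the difference $q_\bbeta-q_\balpha$ equals $M^{-\delta}(p_\delta,1)$ up to a geometric-series error of relative order $M^{-1}$. Given arguments $y_1,\ldots,y_{k+1}$ of $\Psi$, which in the intended use are $q_{\balpha_1},\ldots,q_{\balpha_{k+1}}$ with $\balpha_1<_{\rm lex}\cdots<_{\rm lex}\balpha_{k+1}$, set $u_\ell=y_{\ell+1}-y_\ell$ and $s_\ell=(u_\ell)_{d+1}$. Then $s_\ell\approx M^{-\delta_\ell}$, so order relations between $\delta_\ell$ and $\delta_{\ell+1}$ are witnessed by the reverse order relations between $s_\ell$ and $s_{\ell+1}$; in particular $\delta_1<\cdots<\delta_k$ corresponds to $s_1>\cdots>s_k$, and $\delta_1<\delta_2>\delta_3$ to $s_1>s_2<s_3$. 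Moreover, $\tilde p_\ell:=\pi_d(u_\ell)/s_\ell$ approximates $p_{\delta_\ell}$ to within $O(M^{-1})$, where $\pi_d$ projects onto the first $d$ coordinates.

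Following the structure of (\ref{def-stepup}), $\Psi(y_1,\ldots,y_{k+1})$ is declared to hold exactly when one of the following occurs: (i) $s_1>s_2>\cdots>s_k$ and $\Phi(\tilde p_1,\ldots,\tilde p_k)$ holds; (ii) $s_1<s_2<\cdots<s_k$ and $\Phi(\tilde p_1,\ldots,\tilde p_k)$ holds; or (iii) $s_1>s_2$ and $s_2<s_3$. Each of these is a Boolean combination of polynomial inequalities in the coordinates of $y_1,\ldots,y_{k+1}$: the comparisons among $s_\ell$'s are linear, and every inequality $f(\tilde p_1,\ldots,\tilde p_k)\ge 0$ underlying $\Phi$ is cleared to a polynomial inequality in $y$ by multiplying through by a sufficiently high power of $\prod_\ell s_\ell$, which preserves signs since $s_\ell>0$ on every tuple from $Q$ taken in lex order.

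The main obstacle is choosing $M$ large enough that all approximations are correct, uniformly over all tuples. Observe first that consecutive $\delta_\ell,\delta_{\ell+1}$ are always distinct, since $\alpha_{\ell+1,\delta_\ell}=1$ while $\alpha_{\ell+1,\delta_{\ell+1}}=0$. Hence one needs $M$ so large that (a) the sign pattern of $(s_1-s_2,\ldots,s_{k-1}-s_k)$ is determined by the order pattern of $(\delta_1,\ldots,\delta_k)$ with a uniform positive gap; (b) for every tuple to which case (i) or (ii) applies, $\|\tilde p_\ell-p_{\delta_\ell}\|$ stays below the $\eta$ witnessing robustness of $\Phi$ on $P$, so that $\Phi(\tilde p_1,\ldots,\tilde p_k)\Leftrightarrow\Phi(p_{\delta_1},\ldots,p_{\delta_k})$; and (c) every polynomial inequality used in defining $\Psi$ evaluates with uniform slack on $Q$, yielding robustness of $\Psi$. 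Since there are only finitely many tuples and all errors are $O(M^{-1})$ times quantities bounded in terms of $P$, $\Phi$, and $\prec$, a single threshold on $M$ works for (a)--(c), and under this choice a direct case analysis matching (\ref{def-stepup}) shows that $\chi_\Psi$ coincides with the stepping-up coloring $\chi'$ of $\chi_\Phi$.
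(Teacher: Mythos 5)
Your construction is essentially the paper's: your $q_\balpha=\sum_i\alpha_iM^{-i}(p_i,1)$ is the paper's $q_\balpha=\sum_i\alpha_i\eps^i(1,p_{i,1},\ldots,p_{i,d})$ with $\eps=1/M$ and the auxiliary coordinate moved to the end, your $\tilde p_\ell=\pi_d(u_\ell)/s_\ell$ is exactly the map $\sdelta$, and fixing one sufficiently large $M$ via robustness plus finiteness of $Q$ is the paper's ``$\eps$ sufficiently small''. The one genuine (and harmless, even slightly simplifying) deviation is that you detect the relative order of $\delta_\ell$ and $\delta_{\ell+1}$ by comparing the magnitudes $s_\ell$, $s_{\ell+1}$ instead of applying the order-inducing predicate $\prec$ to the recovered points; this works because consecutive $\delta$'s are distinct and $s_\ell\approx M^{-\delta_\ell}$, and it lets $\Psi$ avoid $\prec$ altogether.

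There is, however, a concrete flaw in your definition of $\Psi$: clause (ii) is wrong as stated. When $\delta_1>\cdots>\delta_k$ (your case $s_1<\cdots<s_k$), the stepping-up coloring assigns $\chi_\Phi(\{\delta_1,\ldots,\delta_k\})$, and $\chi_\Phi$ is defined on $k$-element subsets through the \emph{increasing} enumeration of indices, so the required value is $\Phi(p_{\delta_k},\ldots,p_{\delta_1})$. Your clause tests $\Phi(\tilde p_1,\ldots,\tilde p_k)$, i.e.\ a perturbation of the points taken in \emph{decreasing} index order; for a non-symmetric $\Phi$ the induced coloring $\chi_\Psi$ then differs from the stepping-up coloring on precisely these tuples, so the conclusion of the proposition fails. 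In addition, robustness of $\Phi$ on $P$ is only stipulated for tuples $p_{i_1},\ldots,p_{i_k}$ with $i_1<\cdots<i_k$, so in the decreasing case you cannot even control the truth value of $\Phi(\tilde p_1,\ldots,\tilde p_k)$ from the data you have. The repair is exactly what the paper does: in case (ii) use $\Phi(\tilde p_k,\ldots,\tilde p_1)$ (arguments reversed); with that change your steps (a)--(c) go through and the rest of your argument is sound.
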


\begin{proof}
The construction of $Q$ uses a parameter $\eps>0$, which we
assume to be sufficiently small.

For $\balpha=(\alpha_1,\ldots,\alpha_N)\in J$, we set
\[
q_\balpha := \sum_{i=1}^N \alpha_i\eps^i (1,p_{i,1},p_{i,2},\ldots,p_{i,d})\in \R^{d+1}.
\]

In particular, the first coordinate of $q_{\balpha}$ is
$\sum_{i=1}^N \alpha_i\eps^i$. Hence, as is easy to check,
for $\eps$ sufficiently small, 
the lexicographic ordering
of $J$ agrees with the ordering of $Q$ by the first coordinate,
and hence we can take the standard ordering in the first coordinate
as the required order-inducing (and obviously robust)
predicate $\prec'$ on~$Q$.

Next, we define a mapping $\sdelta\:\R^{d+1}\times\R^{d+1}\to \R^d$,
which will play the role of the $\delta$ from the stepping-up lemma
in the geometric setting. For points $x,y\in \R^{d+1}$, we set
\begin{equation}\label{e:sdelta}
\sdelta(x,y):= \left(\frac{x_2-y_2}{x_1-y_1},\frac{x_3-y_3}{x_1-y_1},\ldots,
\frac{x_{d+1}-y_{d+1}}{x_1-y_1}\right) \in\R^d.
\end{equation}
(Actually, $\sdelta(x,y)$ is undefined for $x_1=y_1$, but we will 
use $\sdelta$ only for points with different first coordinates.)

By elementary calculation we can see that for $\balpha,\bbeta\in J$,
$\balpha\ne\bbeta$, we have
\begin{equation}
\label{e:lim}
\lim_{\eps\to 0} \sdelta(q_\balpha,q_{\bbeta})= p_{\delta(\balpha,\bbeta)}.
\end{equation}
This allows us to imitate the combinatorial definition \eqref{def-stepup}
of the stepping-up coloring
by a semialgebraic predicate $\Psi$. For a $(k+1)$-tuple of points
$(x_1,\ldots,x_{k+1})$ in $\R^{d+1}$, let us write
$\sdelta_\ell:=\sdelta(x_\ell,x_{\ell+1})$, and set
\[
\Psi(x_1,\ldots,x_{k+1}):=\alterdef{
\Phi(\sdelta_1,\ldots, \sdelta_k)&\mbox{ if }
\sdelta_1 \prec \cdots \prec \sdelta_k\\
\Phi(\sdelta_k,\ldots, \sdelta_1)&\mbox{ if }
\sdelta_1 \succ \cdots \succ \sdelta_k\\
\mathrm{true}&\mbox{ if } \sdelta_1 \prec \sdelta_2\succ \sdelta_3\\
\mathrm{false}&\mbox{ otherwise.}
}
\]
As written, $\Psi$ is not necessarily a semialgebraic predicate,
since the definition of $\sdelta$ involves division. However, we can
always multiply by the denominators and introduce appropriate
conditions; e.g., $\frac uv<1$ can be replaced with
$(u<v\wedge v>0)\vee (u>v\wedge v<0)$, which is equivalent whenever $\frac uv$
is defined. In this way, we obtain an honest semialgebraic predicate.

It remains to check that $\Psi$ induces the stepping-up coloring
on $J$, which is straightforward using the robustness of $\Phi$
and $\prec$ and the limit relation \eqref{e:lim}. Indeed,
let us fix  $\balpha_1<_{\rm lex}\cdots <_{\rm lex}\balpha_{k+1}\in J$ and
write $\sdelta_\ell:=\sdelta(q_{\balpha_\ell},q_{\balpha_{\ell+1}})$
and $\delta_\ell:=\delta(\balpha_\ell,\balpha_{\ell+1})$.
Then for $\eps$ sufficiently small, we have $\sdelta_\ell
\prec\sdelta_{\ell+1}$ iff $p_{\delta_\ell}\prec p_{\delta_{\ell+1}}$
(by the robustness of $\prec$) iff $\delta_\ell<\delta_{\ell+1}$
(since $\prec$ is order-inducing on $P$).
Assuming $\sdelta_1\prec \sdelta_2\prec\cdots\prec\sdelta_k$,
we get that $\Phi(\sdelta_1,\ldots,\sdelta_k)$ iff
$\Phi(p_{\delta_1},\ldots,p_{\delta_k})$, again for
all sufficiently small $\eps$; similarly if
$\sdelta_1\succ \sdelta_2\succ\cdots\succ\sdelta_k$.
Therefore,
the coloring induced by $\Psi$ on $Q$ is indeed
the stepping-up coloring for $\chi_\Phi$ as claimed.

It remains to verify that $\Psi$ is robust on $Q$, but this is clear
from the robustness of $\Phi$ and $\prec$ and the continuity of 
$\sdelta$ on the subset of $\R^{d+1}\times\R^{d+1}$ where it is defined.
\end{proof}


\heading{Proof of Theorem \ref{t:lb}.}
As announced, we prove the theorem by induction on $d$.

For the base case $d=1$, we use a result of Conlon et al.~\cite{cfpss-semialg},
who construct a $4$-ary semialgebraic predicate $\Phi_1$ on $\R^1$
and, for every $n$, a sequence $P_1\subset \R$ of length $\twr_3(\Omega(n))$
with no $\Psi_1$-homogeneous subsequence of length $n$. It is obvious
from their construction that $\Psi_1$ is robust on $P_1$ and
that $<$, the usual inequality among real numbers, is robust and order-inducing
on $P_1$.

The theorem then follows  by a $(d-1)$-fold application of 
Proposition~\ref{p:lb-i} together with the stepping-up 
lemma (Lemma~\ref{lem:step-up}).
\proofend

\section{Lower bound for super order type}\label{s:lb-sup}

Here we prove Theorem~\ref{t:super}. Thus, we need to exhibit
long point sequences without super-order-type homogeneous subsequences
of length $n$. The construction is almost identical to the one
in the previous section, only the base case for $d=1$ is different.
The proof essentially consists in relating super-order-type homogeneity
to another property, which we call super-monotonicity; checking
that the constructed sequence has no super-monotone subsequences
of length~$n$
is straightforward. 


First, for convenience, we extend the definition of the bivariate function
$\sdelta$ from \eqref{e:sdelta} in
the previous section to an arbitrary number of arguments.
Namely, we set $\sdelta(p)=p$ and, for $k\ge 2$,
\[
\sdelta(p_1,\dots,p_{k+1}):=\sdelta(\sdelta(p_1,\dots,p_k),\sdelta(p_2,\dots,p_{k+1})).
\]
Again, we are going to use $\sdelta$ only with arguments for which it
is well defined.

For points $p,q\in\R^d$, we write $p\leqone q$ if $p_1<q_1$ (strict
inequality in the first coordinate).
A point sequence $P=(p_1\dots,p_n)$ in $\R^d$ is 
\emph{super-monotone} if each of the point sequences 
$(\sdelta(p_1,\dots,p_j),\dots,\sdelta(p_{n-j+1},\dots,p_n))$ 
in $\R^{d-j+1}$  is monotone according to $\leqone$,  $1\le j\le d$.

Here is the key technical result.

\begin{prop}\label{p:sot-sm}
A point sequence $(p_1,\dots,p_n)$ in $\R^d$ is super-monotone 
if and only if it is super-order-type homogeneous.
\end{prop}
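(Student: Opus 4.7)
The plan is to prove the proposition by induction on $d$, using the map $\sdelta_1\colon (p_1,\dots,p_n) \mapsto (\sdelta(p_1,p_2),\dots,\sdelta(p_{n-1},p_n))$, which sends a sequence in $\R^d$ to one in $\R^{d-1}$. The base case $d=1$ is immediate since both conditions reduce to strict monotonicity of the sequence. The key technical tool is a reduction identity, derived by elementary column operations on the $(d{+}1)\times(d{+}1)$ orientation matrix (subtract consecutive columns, then factor out first-coordinate differences): for any $r_0,\dots,r_d \in \R^d$ with distinct first coordinates,
\[
\det(r_0,\dots,r_d) = \det(\sdelta(r_0,r_1),\dots,\sdelta(r_{d-1},r_d))\cdot\prod_{i=0}^{d-1}(r_{i+1,1}-r_{i,1}),
\]
where $\det(\cdot)$ denotes the orientation determinant of the given tuple. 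Unwinding the recursive definition of $\sdelta$ also shows that $\sdelta(p_i,\dots,p_{i+j-1})$ is the $(j{-}1)$-fold iterate of $\sdelta_1$ applied to $P$, so the super-monotone conditions for $P$ at levels $2,\dots,d$ coincide with those for $P':=\sdelta_1(P)$ at levels $1,\dots,d-1$. Hence $P$ is super-monotone iff its first coordinate is strictly monotone and $P'$ is super-monotone in $\R^{d-1}$.

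For super-order-type homogeneous $\Rightarrow$ super-monotone, I would iterate the identity. If $P$ is super-order-type homogeneous, then $\pi_1(P)$ being OT-homogeneous gives first-coordinate monotonicity, and for each $j \ge 2$ OT-homogeneity of $\pi_j(P)$ forces $\sgn\det(\pi_j(p_i),\dots,\pi_j(p_{i+j}))$ to be independent of $i$. Iterating the identity expresses this determinant as a product of first-coordinate differences at the $j$ levels of the $\sdelta$-pyramid. Inducting on $j$, the sign-constancy at each lower level (known from the previous inductive step) forces the next quantity $h^{(j)}_i := \sdelta(p_i,\dots,p_{i+j-1})_1$ to be strictly monotone in $i$, which is exactly the super-monotone condition at level $j$.

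For super-monotone $\Rightarrow$ super-order-type homogeneous, the inductive hypothesis on $d$ gives that $P'$ is super-OT-homogeneous, so $\sdelta_1(\pi_j(P)) = \pi_{j-1}(P')$ is OT-homogeneous in $\R^{j-1}$ for every $j=2,\dots,d$. The induction closes using the sub-lemma: if $R \in \R^j$ has strictly monotone first coordinate and $\sdelta_1(R)$ is OT-homogeneous in $\R^{j-1}$, then $R$ is OT-homogeneous in $\R^j$. The key observation is that every ``skip'' $\sdelta(r_a,r_b)$ is a positive convex combination $\sum_{k=a}^{b-1} c_k \sdelta(r_k,r_{k+1})$ with $c_k$ proportional to $r_{k+1,1}-r_{k,1}$ and $\sum_k c_k = 1$. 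Hence the column $(1,\sdelta(r_a,r_b))^{\top}$ is the same convex combination of the columns $(1,\sdelta(r_k,r_{k+1}))^{\top}$, and multilinearity of the determinant expands the ``staircase'' $\det(\sdelta(r_{i_0},r_{i_1}),\dots,\sdelta(r_{i_{j-1}},r_{i_j}))$ on the right-hand side of the identity as a positive combination of determinants $\det(s_{k_1},\dots,s_{k_j})$ with $s_k := \sdelta(r_k,r_{k+1})$ and each $k_l$ drawn from the disjoint interval $[i_{l-1},i_l)$. The disjointness forces $k_1<\dots<k_j$, so only nondegenerate determinants contribute; each has a fixed sign by the OT-homogeneity of $\sdelta_1(R)$, so the combination has that sign, and the identity translates this to OT-homogeneity of $R$. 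The main step to set up carefully is this multilinear expansion together with the disjoint-intervals argument guaranteeing that no degenerate terms enter.
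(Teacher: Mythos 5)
Your proof is correct, but it takes a genuinely different route from the paper's. The paper fixes the sequence and inducts on the level $k$ of monotonicity/homogeneity, using the closed formula $\sdelta(p_1,\dots,p_k)=\overrightarrow{D}_k/D_{k-1}$ (Lemma~\ref{lem:sigma}) together with the Sylvester-type identity of Lemma~\ref{lem:matrices} to translate $\leqone$-monotonicity of the consecutive $\sdelta$-windows at level $k+1$ into sign-constancy of the consecutive determinants $D_{k+1}(p_i,\dots,p_{i+k+1})$. You instead induct on the dimension via the derived sequence $P'=\sdelta_1(P)$, and your single factorization $\det(r_0,\dots,r_d)=\det\bigl(\sdelta(r_0,r_1),\dots,\sdelta(r_{d-1},r_d)\bigr)\prod_{i}(r_{i+1,1}-r_{i,1})$ (which I checked, and which is the same column-operation algebra repackaged) replaces both lemmas. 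The most substantive difference is in the direction ``super-monotone $\Rightarrow$ super-order-type homogeneous'': your telescoping identity $\sdelta(r_a,r_b)=\sum_{k=a}^{b-1}c_k\,\sdelta(r_k,r_{k+1})$ with $c_k>0$, combined with multilinearity and the disjoint-interval observation forcing $k_1<\dots<k_j$, shows explicitly that sign-constancy over increasing subsequences of $\sdelta_1(R)$ propagates to \emph{all} $(j+1)$-subsequences of $R$, not just consecutive windows. The paper handles this passage in one sentence (sign-constancy of the consecutive $D_{k+1}$-windows ``is precisely the condition needed'' for $(k+1)$-order-type homogeneity), which is exactly the Fekete-type step your convex-combination argument spells out; so your approach is a bit heavier to set up but more self-contained at that point, whereas the paper's level-induction confines all computation to two short determinant lemmas. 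Your forward direction, via the iterated product formula over the $\sdelta$-pyramid, only needs consecutive windows and is fine as stated.
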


The proof will be given at the end of this section, after
some algebraic lemmas. First we finish the proof of Theorem~\ref{t:super},
assuming the proposition.

\heading{Proof of Theorem \ref{t:super}.}
We will construct a sequence $P_d(n)$ in general position
in $\R^d$ of length $\twr_d(n-d)$ and containing 
no super-order-type homogeneous subsequence of length~$n$.

We proceed by induction on $d$. The inductive hypothesis will include
the assumption that the first coordinate in $P_d(n)$ is strictly
increasing.

For $d=1$ we set $P_1(n):=(1,2,\dots,n-1)$. 

Now we construct $P_{d+1}(n)$ from $P_d(n-1)=(p_1,\ldots,p_N)$,
using the same construction as in Proposition \ref{p:lb-i}. That is,
$
P_{d+1}(n)=(q_\balpha:\balpha\in \{0,1\}^N),
$
where the binary vectors $\balpha$ are ordered lexicographically,
and where, with $\eps>0$ sufficiently small,
\[
q_\balpha:= \sum_{i=1}^N \alpha_i\eps^i(1,p_{i,1},p_{i,2},\ldots,p_{i,d}) \in \R^{d+1}, \quad \balpha\in\{0,1\}^N.
\]
(The $\eps$ is different in each inductive step, and in particular,
the one used to construct $P_{d+1}(n)$ from $P_d(n-1)$ is much smaller
than the one used to construct $P_{d}(n-1)$ from $P_{d-1}(n-2)$, etc.)
Because of the robustness of the super-order-type condition, we can slightly perturb the points so that they are in general position.
As in the previous section, the points of $P_{d+1}(n)$, ordered
according to the lexicographic ordering
of the indices $\balpha$, have increasing first coordinates
 (for $\eps$ sufficiently small).

Now we assume for contradiction
 that $P_{d+1}(n)$ contains a super-order-type homogeneous subsequence 
$S=(s_1,\dots,s_n)$.
By Proposition~\ref{p:sot-sm}, $S$ is super-monotone. 
Thus, setting $t_\ell=\sdelta(s_\ell,s_{\ell+1})$, $\ell=1,2,\ldots,n-1$,
the sequence $T=(t_1,\ldots,t_{n-1})$ is super-monotone as well by definition.

By the limit relation \eqref{e:lim}, for $\eps\to 0$,
each $t_\ell$ tends to a point $p_{i_\ell}$ of $P_{d}(n-1)$.
Moreover, by super-monotonicity, we have $t_1\leqone\cdots\leqone t_{n-1}$.
Hence $p_{i_1}\leqone \cdots\leqone p_{i_{n-1}}$ for sufficiently
small $\eps$ and therefore, since
the first coordinates are increasing in $P_d(n-1)$ by the inductive
hypothesis, we have $i_1<\cdots<i_{n-1}$.
Consequently, using Proposition~\ref{p:sot-sm} again, $(p_{i_1}, \dotsc,
p_{i_{n-1}})$ is a super-order-type homogeneous subsequence of
$P_d(n-1)$---a contradiction proving the theorem.
\proofend

\heading{Algebraic lemmas. } It remains to prove Proposition~\ref{p:sot-sm},
and for this, we need to develop some algebraic results.

Given a $k$-tuple $T=(p_1,\dots,p_k)$ of points in $\R^d$, $1\le k\le d$,
 and an index $j\ge k-1$, we put
\[
D_j(T)=\det
\begin{pmatrix}
1 & 1 & \dots & 1\\
p_{1,1} & p_{2,1} & \dots & p_{k,1}\\
\vdots & \vdots & \ddots & \vdots\\
p_{1,k-2} & p_{2,k-2} & \dots & p_{k,k-2}\\
p_{1,j} & p_{2,j} & \dots & p_{k,j}\\
\end{pmatrix}
\]
and
\[\overrightarrow D_j(T)=(D_j(T),D_{j+1}(T),\dots,D_d(T)).\]
Let us remark that $k$ is not represented explicitly in the notation,
but it can be inferred from the number of arguments of $D_j$.
We also note that $\sgn D_{k-1}(p_1,\dots,p_k)$ is the sign of the $k$-tuple
$\pi_{k-1}(T)$.


\begin{lemma}\label{lem:matrices}
If $A=(p_1,\dots,p_k)$ and $B=(p_2,\dots,p_{k+1})$, then, for
$j\ge k$, we have
\[D_{k-1}(A) D_{j} (B)- D_{k-1}(B) D_{j}(A)=D_{k-2}(p_2,\dots,p_k) D_{j}(p_1,\dots,p_{k+1}).\]
\end{lemma}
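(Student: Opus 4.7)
The plan is to derive the identity from Sylvester's determinant identity (the Desnanot--Jacobi identity), which says that for an $n\times n$ matrix $A$ and any two rows $r<s$ and two columns $c<e$,
$$\det(A_{r,c})\det(A_{s,e}) - \det(A_{r,e})\det(A_{s,c}) = \det(A)\cdot\det(A_{\{r,s\};\{c,e\}}),$$
where $A_{r,c}$ denotes $A$ with row $r$ and column $c$ deleted and $A_{\{r,s\};\{c,e\}}$ denotes $A$ with both rows $r,s$ and both columns $c,e$ deleted.

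First I would set up the $(k+1)\times(k+1)$ matrix $M$ whose $i$th column is the vector $(1,p_{i,1},\dots,p_{i,k-1},p_{i,j})^{T}$, so that $\det M$ is precisely $D_j(p_1,\dots,p_{k+1})$, the right-hand factor in the lemma. Next I would take the two rows to be the last two rows of $M$ (the $p_{i,k-1}$-row and the $p_{i,j}$-row) and the two columns to be the first and the last columns (corresponding to $p_1$ and $p_{k+1}$). The four single-deletion minors then match the four determinants on the left-hand side of the lemma: deleting the $p_{i,k-1}$-row leaves the structure of a $D_j$-determinant and deleting the $p_{i,j}$-row leaves the structure of a $D_{k-1}$-determinant, while the two column choices select between the subsequences $A=(p_1,\dots,p_k)$ and $B=(p_2,\dots,p_{k+1})$. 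The double deletion yields exactly the $(k-1)\times(k-1)$ matrix whose determinant is $D_{k-2}(p_2,\dots,p_k)$. Substituting these identifications into Sylvester's identity then gives the claim.

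The one point that requires care is verifying that no extra sign appears when Sylvester's identity is applied at non-corner rows and columns. I would handle this either by permuting the rows and columns of $M$ to bring the chosen two rows and two columns into the four corners and checking that the resulting sign changes cancel pairwise across the identity, or, more concretely, by verifying the base case $k=2$ directly, where the whole lemma reduces to an expansion of a single $3\times 3$ determinant. Once the sign is settled, the rest of the proof is pure bookkeeping with the definition of $D_j$.
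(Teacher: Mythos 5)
Your proof is correct, but it takes a genuinely different route from the paper's. You recognize the lemma as an instance of the Desnanot--Jacobi (Dodgson condensation / Sylvester) identity applied to the $(k+1)\times(k+1)$ matrix $M$ whose $i$th column is $(1,p_{i,1},\dots,p_{i,k-1},p_{i,j})^{T}$, and your identification of the minors is exactly right: deleting the $p_{\cdot,j}$-row and the last or first column gives $D_{k-1}(A)$ or $D_{k-1}(B)$, deleting the $p_{\cdot,k-1}$-row gives $D_j(A)$ or $D_j(B)$, the double deletion gives $D_{k-2}(p_2,\dots,p_k)$, and $\det M=D_j(p_1,\dots,p_{k+1})$. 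The sign worry you flag resolves favorably: for any rows $r<s$ and columns $c<e$, paired as you pair them, the identity holds with no extra sign, because the cofactor signs $(-1)^{r+c}(-1)^{s+e}$ and $(-1)^{r+e}(-1)^{s+c}$ on the left cancel against the sign $(-1)^{r+s+c+e}$ in Jacobi's theorem on minors of the adjugate; concretely, your first fallback (cyclically permuting rows $1,\dots,k$ to bring row $k$ to the top and observing that the factor $(-1)^{k-1}$ appears on both sides of the corner identity) goes through. Note, though, that your second fallback of checking only $k=2$ would not by itself exclude a $k$-dependent sign, so keep the permutation (or adjugate) argument. The paper proceeds differently and self-containedly: it first reduces to $j=k$ (the $j$th coordinates occur only in $D_j(A)$, $D_j(B)$, $D_j(p_1,\dots,p_{k+1})$), then uses row and column operations to diagonalize the central block $M_{k-1}(p_2,\dots,p_k)$ and clear the entries around it, after which all six determinants become monomials in $x,y,u,v,\det M_{k-1}$ and the identity reduces to $xv-yu=xv-yu$. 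That elimination is in effect a proof of the very condensation identity you quote, so the algebraic core is the same; your version is shorter but imports a classical identity in its non-corner form, while the paper's keeps the lemma self-contained and sidesteps any sign discussion. A small bonus of your setup is that no reduction to $j=k$ is needed, since $M$ carries the $j$th coordinates in its last row from the start.
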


\begin{proof}
It is enough to do the case $j=k$ (we have $j\ge k$, and so
in the identity of the lemma, the
$j$th coordinates of the $p_i$ appear only in the determinants
$D_j(A)$, $D_j(B)$, and $D_j(p_1,\ldots,p_{k+1})$).
We define the $(k+1)\times(k+1)$ matrix
\[M_{k+1}(p_1,\dots,p_{k+1})=
\begin{pmatrix}
1 & 1 & \dots & 1\\
p_{1,1} & p_{2,1} & \dots & p_{k+1,1}\\
\vdots & \vdots & \ddots & \vdots\\
p_{1,k} & p_{2,k} & \dots & p_{k+1,k}
\end{pmatrix}.\]

All the determinants we are interested in are submatrices 
of $M_{k+1}$ and they all contain the matrix $M_{k-1}=
M_{k-1}(p_2,\dots,p_k)$ associated with $D_{k-2}(p_2,\dots,p_k)$.
We can use elementary row and column operations on $M_{k+1}$ to diagonalize 
$M_{k-1}$ while leaving the determinants fixed, and we can also assume 
that the entries below $M_{k-1}$, as well as those
to the left and to the right of it, are~$0$, as is illustrated next:
\[
\begin{pmatrix}
\cline{2-4}
1 & \multicolumn{1}{|c}{1} & \dots & \multicolumn{1}{c|}{1} & 1\\
p_{1,1} & \multicolumn{1}{|c}{p_{2,1}} & \dots & \multicolumn{1}{c|}{p_{k,1}} & p_{k+1,1}\\
\vdots & \multicolumn{1}{|c}{\vdots} & M_{k-1} & \multicolumn{1}{c|}{\vdots} & \vdots\\
p_{1,k-2} & \multicolumn{1}{|c}{p_{2,k-2}} & \dots & \multicolumn{1}{c|}{p_{k,k-2}} & p_{k+1,k-2}\\
\cline{2-4}
p_{1,k-1} & p_{2,k-1} & \dots & p_{k+1,k-1} & p_{k+1,k-1}\\
p_{1,k} & p_{2,k} & \dots & p_{k+1,k} & p_{k+1,k}\\
\end{pmatrix}
\longrightarrow
\begin{pmatrix}
\cline{2-4}
0 & \multicolumn{1}{|c}{m_1} & \dots & \multicolumn{1}{c|}{0} & 0\\
0 & \multicolumn{1}{|c}{0} & \dots & \multicolumn{1}{c|}{0} & 0\\
\vdots & \multicolumn{1}{|c}{\vdots} & \ddots & \multicolumn{1}{c|}{\vdots} & \vdots\\
0 & \multicolumn{1}{|c}{0} & \dots & \multicolumn{1}{c|}{m_{k-2}} & 0\\
\cline{2-4}
x & 0 & \dots & 0 & u\\
y & 0 & \dots & 0 & v\\
\end{pmatrix}.
\]
Now we can compute the determinants in the following way:
\[
\begin{aligned}
D_{k}(p_1,\ldots,p_{k+1})&=(-1)^{k+1}(xv-yu)\det(M_{k-1})\\
D_{k-2}(p_2,\dots,p_k)&=\det(M_{k-1})\\
D_{k-1}(A)&=(-1)^{k+1}x\det(M_{k-1})
\end{aligned}
\qquad
\begin{aligned}
D_{k}(B)&=v\det(M_{k-1})\\
D_{k}(A)&=(-1)^{k+1}y\det(M_{k-1})\\
D_{k-1}(B)&=u\det(M_{k-1}).
\end{aligned}
\]
The lemma follows.
\end{proof}

\begin{lemma}\label{lem:sigma}
\[\sdelta(p_1,\dots,p_k)=\frac{\overrightarrow{D}_k(p_1,\dots,p_k)}{D_{k-1}(p_1,\dots,p_k)}.\]
\end{lemma}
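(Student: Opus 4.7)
The plan is to prove the identity by induction on $k$, using the recursive definition of $\sdelta$ together with Lemma~\ref{lem:matrices} as the engine that collapses the nested fractions into the single ratio claimed.

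For the base case $k=2$, I would just compute both sides directly. The original definition \eqref{e:sdelta} gives $\sdelta(p_1,p_2)_j=(p_{1,j+1}-p_{2,j+1})/(p_{1,1}-p_{2,1})$, and for $k=2$ the middle rows $p_{\cdot,1},\ldots,p_{\cdot,k-2}$ are empty, so $D_j(p_1,p_2)$ is the $2\times 2$ determinant $p_{2,j}-p_{1,j}$. The two expressions $(p_{1,j+1}-p_{2,j+1})/(p_{1,1}-p_{2,1})$ and $(p_{2,j+1}-p_{1,j+1})/(p_{2,1}-p_{1,1})$ are manifestly equal, which verifies the base case.

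For the inductive step, assume the formula for $k$, and let $A=(p_1,\dots,p_k)$, $B=(p_2,\dots,p_{k+1})$. By the recursive definition, $\sdelta(p_1,\dots,p_{k+1})=\sdelta(a,b)$ where $a:=\sdelta(A)$ and $b:=\sdelta(B)$. The inductive hypothesis gives $a_i=D_{k+i-1}(A)/D_{k-1}(A)$ and $b_i=D_{k+i-1}(B)/D_{k-1}(B)$. Plugging these into the bivariate formula \eqref{e:sdelta} and clearing the inner denominators by multiplying numerator and denominator by $D_{k-1}(A)D_{k-1}(B)$, I get
\[
\sdelta(a,b)_j=\frac{D_{k-1}(B)\,D_{k+j}(A)-D_{k-1}(A)\,D_{k+j}(B)}{D_{k-1}(B)\,D_{k}(A)-D_{k-1}(A)\,D_{k}(B)}.
\]
Now I apply Lemma~\ref{lem:matrices} to both numerator (with index $k+j\ge k$) and denominator (with index $k$); in each case the right-hand side factors as $D_{k-2}(p_2,\dots,p_k)$ times $D_{\cdot}(p_1,\dots,p_{k+1})$. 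The common factor $-D_{k-2}(p_2,\dots,p_k)$ cancels, leaving exactly $D_{k+j}(p_1,\dots,p_{k+1})/D_k(p_1,\dots,p_{k+1})$, which is the $j$-th coordinate of $\overrightarrow{D}_{k+1}(p_1,\dots,p_{k+1})/D_k(p_1,\dots,p_{k+1})$. This completes the induction.

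There is no real obstacle here beyond bookkeeping: the nontrivial algebraic content is concentrated in Lemma~\ref{lem:matrices}, and once that identity is available the induction is essentially a one-line cancellation. The only point that requires minor care is keeping track of the ambient dimension dropping by one at each recursion, so that the range of $j$ in $\overrightarrow{D}_{k+1}$ matches the range of $j$ in the output of $\sdelta(a,b)\in\R^{d-k}$, and verifying that the hypotheses $j\ge k-1$ required by the definition of $D_j$ are met whenever Lemma~\ref{lem:matrices} is invoked. Implicit throughout is a nondegeneracy assumption (the $D_{k-1}$'s and $a_1-b_1$ must be nonzero), which matches the standing assumption that $\sdelta$ is only used when well defined.
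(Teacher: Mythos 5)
Your proof is correct and follows essentially the same route as the paper: induction on $k$, expressing $\sdelta(p_1,\dots,p_{k+1})=\sdelta(\sdelta(A),\sdelta(B))$ via the inductive hypothesis, clearing the denominators $D_{k-1}(A)D_{k-1}(B)$, and collapsing numerator and denominator with Lemma~\ref{lem:matrices} (your sign bookkeeping with the common factor $-D_{k-2}(p_2,\dots,p_k)$ is right). The only difference is cosmetic: you work coordinatewise where the paper uses the projection omitting the first coordinate.
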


\begin{proof}
The proof goes by induction on $k$. The cases $k=1,2$ are trivial.
Assume the lemma is true for $k$ and we have points 
$p_1,\dots,p_{k+1}\in\R^d$. For simplicity we write
 $A=(p_1,\dots,p_k)$ and $B=(p_2,\dots,p_{k+1})$. Then we have,
with $\pi\:\R^d\to\R^{d-1}$ denoting the projection omitting the
first coordinate,
\begin{align*}
\sdelta(p_1,\dots,p_{k+1})&=\sdelta(\sdelta(A),\sdelta(B))\\
&= \frac{\pi(\sdelta(A))-\pi(\sdelta(B))}{(\sdelta(A))_1-(\sdelta(B))_1}\\
&= \frac{D_{k-1} (A)\overrightarrow{D}_{k+1} (B)-D_{k-1} (B)\overrightarrow{D}_{k+1} (A)}
{D_{k-1} (A)D_{k} (B)-D_{k-1} (B)D_{k} (A)}.
\end{align*}
The last equality follows from the fact that $\pi(\overrightarrow{D}_{k}) = \overrightarrow{D}_{k+1}$ and by clearing the denominators.
To finish the proof we use Lemma~\ref{lem:matrices} on the denominator and each coordinate of the numerator.
\end{proof}


\heading{Proof of Proposition~\ref{p:sot-sm}.}
We generalize the notions of super-monotonicity and
super-order-type homogeneity as follows.
We say that a point sequence $(p_1\dots,p_n)$ is \emph{$k$-monotone} 
if for all $j\le k$ the point sequence $(\sdelta(p_1,\dots,p_j),\dots,\sdelta(p_{n-j+1},\dots,p_n))$ is monotone according to $\leqone$.
We say that $(p_1\dots,p_n)$ is \emph{$k$-order-type homogeneous} if for all $j\le k$ the sequence of projections $(\pi_j(p_1)\dots,\pi_j(p_n))$ 
in $\R^j$ is order-type homogeneous.

By induction on $k$, we prove that
a point sequence $(p_1,\dots,p_n)$ in $\R^d$ is $k$-monotone 
if and only if it is $k$-order-type homogeneous; for $k=d$
this is the statement of the proposition.

The cases $k=1,2$ are trivial. So we assume that the claim
 is true up to some $k$ and we are given a sequence of $n$ points.
We may assume that this sequence is $k$-monotone, and hence also 
$k$-order-type homogeneous. Then we only need to show that
$(\sdelta(p_1,\dots,p_{k+1}),\dots,\sdelta(p_{n-k},\dots,p_n))$ is 
order-type homogeneous iff $(\pi_{k+1}(p_1),\dots,\pi_{k+1}(p_n))$ 
is monotone according to~$\leqone$.

Let $(q_1,\ldots,q_{k+2})$ be a $(k+2)$-point subsequence of $(p_1,\ldots,p_n)$.
By Lemma~\ref{lem:sigma}, the condition $\sdelta(q_1,\dots,q_{k+1})\leqone\sdelta(q_2,\dots,q_{k+2})$ is equivalent to
\begin{equation}\label{eq:1}
\frac{D_{k+1}(q_2,\dots,q_{k+2})} {D_{k}(q_2,\dots,q_{k+2})}-\frac{D_{k+1}(q_1,\dots,q_{k+1})} {D_{k}(q_1,\dots,q_{k+1})} >0.
\end{equation}
Since $(q_1,\ldots,q_{k+2})$ is $k$-order-type homogeneous,  we have
\[D_{k}(q_1,\dots,q_{k+1})D_{k}(q_2,\dots,q_{k+2})>0,\]
and therefore, \eqref{eq:1} is equivalent to
\[D_{k+1}(q_2,\dots,q_{k+2})D_{k}(q_1,\dots,q_{k+1})-D_{k+1}(q_1,\dots,q_{k+1})D_{k}(q_2,\dots,q_{k+2})>0.\]
By Lemma~\ref{lem:matrices} this is just
\[D_{k-1}(q_2\dots,q_{k+1})D_{k+1}(q_1\dots,q_{k+2})>0.\]
Since our sequence is also $(k-1)$-order-type homogeneous, the numbers
\[D_{k-1}(p_1\dots,p_{k}),\,D_{k-1}(p_2\dots,p_{k+1}),\,\ldots,\,D_{k-1}(p_{n-k+1}\dots,p_{n})\]
have the same sign and therefore the numbers
\[D_{k+1}(p_1\dots,p_{k+2}),\,D_{k+1}(p_2\dots,p_{k+3}),\,\ldots,\,D_{k+1}(p_{n-k-1}\dots,p_{n})\]
also have the same sign.
This is precisely the condition needed for the sequence
to be $(k+1)$-order-type homogeneous.
\proofend

\subsection*{Acknowledgment}

We would like to thank Imre B\'ar\'any for useful discussions.

%
%
%

\bibliographystyle{alpha}
\bibliography{cg,geom}

\end{document}